\title{Derived equivalences induced by good silting complexes}
\author{Simion Breaz}
\address{Babe\c s--Bolyai University, Faculty of Mathematics and Computer Science \\  1, Mihail Kog\u alniceanu, 400084 Cluj--Napoca, Romania}
\email{bodo@math.ubbcluj.ro}
\author{George Ciprian Modoi}
\address{Babe\c s--Bolyai University, Faculty of Mathematics and Computer Science \\  1, Mihail Kog\u alniceanu, 400084 Cluj--Napoca, Romania}
\email{cmodoi@math.ubbcluj.ro}
\thanks{}
\subjclass[2010]{18E30, 16D90, 16E30, 16E99}
\keywords{silting complex, equivalence of categories, dg-algebras, silting module}
\date{\today}
\newcommand{\la}{\longrightarrow}
\newcommand{\N}{\mathbb{N}}
\newcommand{\Z}{\mathbb{Z}}
\DeclareMathOperator{\Hom}{Hom}
\newcommand{\RHom}{{\mathbb{R}{\Hom}}}
\newcommand{\dotimes}{\otimes^{\mathbb{L}}}
\DeclareMathOperator{\End}{End}
\DeclareMathOperator{\DgEnd}{DgEnd}
\DeclareMathOperator{\Ext}{Ext}
\DeclareMathOperator{\Tor}{Tor}
\DeclareMathOperator{\Ker}{Ker}
\DeclareMathOperator{\Img}{Im}
\newcommand{\Zy}{Z}
\newcommand{\Ho}{H}
\newcommand{\Bd}{B}
\DeclareMathOperator{\codim}{codim}
\newcommand{\A}{\mathcal{A}}
\newcommand{\K}{\mathcal{K}}
\newcommand{\Y}{\mathcal{Y}}
\newcommand{\C}{\mathcal{C}}
\newcommand{\calD}{\mathcal{D}}
\newcommand{\T}{\mathcal{T}}
\newcommand{\U}{\mathcal{U}}
\newcommand{\X}{\mathcal{X}}
\newcommand{\CU}{\mathcal{U}}
\newcommand{\CV}{\mathcal{V}}
\newcommand{\CY}{\mathcal{Y}}
\newcommand{\opp}{^\textit{op}}
\newcommand{\Mod}[1]{\hbox{\rm Mod}({#1})}
\newcommand{\add}[1]{\mathrm{add}({#1})}
\newcommand{\thick}[1]{\mathrm{thick}({#1})}
\newcommand{\Thick}[1]{\mathrm{Thick}({#1})}
\newcommand{\Add}[1]{\mathrm{Add}({#1})}
\newcommand{\Der}[1]{\mathbf{D}({#1})}
\newcommand{\Htp}[1]{\mathbf{K}({#1})}
\theoremstyle{plain}
\newtheorem{thm}{Theorem}[section]
\newtheorem{lem}[thm]{Lemma}
\newtheorem{prop}[thm]{Proposition}
\newtheorem{cor}[thm]{Corollary}
\theoremstyle{definition}
\newtheorem{sett}[thm]{Setting}
\theoremstyle{definition}
\theoremstyle{remark}
\newtheorem{rem}[thm]{Remark}
\newtheorem{expl}[thm]{Example}
\begin{document}

\begin{abstract}
We study the equivalences induced by some special silting objects in the derived category over dg-algebra whose positive cohomologies are all zero. 
\end{abstract}

	\maketitle

	\section*{Introduction}

Tilting theory generalizes Morita theory in the following sense: Let $A$ be a $k$-algebra, where $k$ is a commutative ring with one, and let $T\in\Mod A$ (right modules) be a classical 1-tilting module with endomorphism $E=\End_A(T)$.
Then $T$ induces the torsion theories $(\CU,\CV)$ in $\Mod{A}$ with $\CU=\Ker\Ext_A^1(T,-)$ and $\CV=\Ker\Hom_R(T,-)$, respectively $(\X,\CY)$ in $\Mod{E}$ with $\X=\Ker(-\otimes_E T)$ and $\CY=\Ker\Tor_1^R(-,T)$ such that the functors $\Hom_A(T,-):\CU\rightleftarrows \CY:-\otimes_ET$ and $\Ext_R^1(T,-):\CV\rightleftarrows \X:\Tor_1^R(-,T)$ are mutually inverse equivalences, called \textit{counter equivalences}. We note that in this case $T$ has to be finitely presented. 
A result that makes precise these things was formulated first by Brenner and Butler in \cite{BB} and was afterward called the Tilting Theorem. This result was extended to classical $n$-tilting modules in \cite{My}. 
Note that a version of the Tilting Theorem can be formulated at the level of derived category and derived functors, see \cite{Hap}. This leads to a Morita Theory for derived categories which has as a culminating point the theory developed by Rickard in \cite{Ric1} and \cite{Ric2}. Moreover, a very general result was proved by Keller in \cite{KDG}: every reasonable triangulated category is equivalent to a derived category associated to a dg-algebra. The right choice for this dg-algebra is the dg-endomorphism algebra of a compact generator. This covers the derived version of the tilting theorem since every $n$-tilting module corresponds to a compact generator in the derived category, and the tilting condition assures that the associated dg-endomorphism algebra is the usual endomorphism ring. We refer to \cite{KDT} for a self--contained survey.

However, in the study of module categories,  infinitely generated tilting modules play an important role, \cite{GT06}. It was observed (e.g., in \cite{fac1} and \cite{fac2}) that in some cases versions of the above-mentioned counter equivalences  are still valid, but it is not easy to compute the categories of right $E$-modules which are involved. A general result due to Bazzoni in \cite{Baz1} takes a significant step in this sense by introducing so called good $1$-tilting modules. 
Continuing the same way of thinking, in \cite{BMT} it is shown that if $T\in\Mod{A}$ is a good $n$-tilting module then the derived hom functor $\RHom_A(T,-)$ induces an
equivalence between the derived category $\Der{A}$ and a subcategory of the derived category $\Der{\mathrm{End}_A(T)}$. As in the classical case, these results have a very general version, proved in \cite{NS}.

For practical reasons, the class of classical tilting modules was extended to the class of support $\tau$-tilting modules in \cite{AIR}, which can be interpreted as the compact silting complexes of length $2$ introduced in \cite{KV}. 
It is shown in \cite{HKM} and \cite{Buan} that support $\tau$-tilting modules still induce counter equivalences, but in order to obtain these we have to replace the endomorphism ring of the module with the endomorphism ring of the corresponding compact silting complex as an object in the corresponding derived category. 
 We refer to \cite{FMS}, \cite{Jasso} and \cite{NSZ} for other results on equivalences induced by tilting or compact silting complexes.

As in the case of tilting theory, the theory of support $\tau$-tilting modules was extended to a theory developed for infinitely generated modules, called semi-tilting or silting modules, in \cite{Wei-semi} and \cite{AMV}. Silting modules correspond to non-compact silting complexes of length $2$, more precisely concentrated in $-1$ and $0$ (in fact they are the zeroth cohomology of such a complex). The main ideas from \cite{Baz1} can be applied to these modules/complexes to provide counter equivalences. 
We will apply the general equivalence theorem \cite[Theorem 6.4]{NS} to prove that some special, non-necessarily compact, silting objects over coconnective dg-algebras can be used to construct silting counter equivalences that generalize that from \cite{My}. 

In what follows we outline the organisation of the paper. We start in Section \ref{preli} with some generalities about the dg-algebras and their derived categories. Further, we define t-structures and silting objects in triangulated categories with coproducts, we list some basic properties of them and observe that a dg-algebra is silting in its derived category if and only if it is coconnective (i.e. its positive cohomologies are all zero). 
    
In Section \ref{der-eq} we consider a dg-$A$-$B$-bimodule $U$ and in Theorem \ref{good-equiv} we give conditions for the derived functor \[\RHom_A(U,-):\Der{A,d}\to\Der{B,d}\] to be fully faithful. We note that in the case when $B$ is exactly the dg-endomorphism algebra of the dg-$A$-module $U$, Theorem \ref{good-equiv} is already contained in \cite[Theorem 6.4]{NS}. Our  proof is based on, seemingly new, Proposition \ref{p:ext-tor}, which is a derived version of the well-known Ext-Tor relations from homological algebra (see \cite[Lemma 1.2.11]{GT06}). Next in the particular case when $B$ is exactly the dg-endomorphism algebra of the dg-$A$-module $U$ we reformulate the hypotheses needed in Theorem \ref{good-equiv} in a more concrete way, by using the thick subcategories generated by $U$.

In the last Section \ref{s:gsc} we consider some special silting objects, called silting complexes, in the derived category associated to a coconnective dg-algebra. In this way, we extend the semi-tilting complexes defined in \cite{Wei-semi}. We prove that these objects are always equivalent to some complexes, called good, that satisfy the hypotheses of Theorem \ref{good-equiv}.  Next, we apply the results in the previous Section in order to obtain some counter equivalences for good silting complexes, similar to the tilting case. We conclude the paper by showing that our results generalize the results involving (derived) equivalences for tilting complexes in \cite{My} and \cite{BMT}.  
    
In this paper all rings/algebras are unital, and $\Mod{A}$ means the category of right $R$-modules, and $\Der{A}$ is the corresponding derived category. If $\C$ is a category then the groups of homomorphisms are denoted by  $\Hom_\C(X,Y)$. If we are inside a derived category, we will use the word isomorphism instead of quasi-isomorphism. We recall that in our cases the hearts associated to $t$-structures are abelian categories, and that two objects in such a heart are isomorphic if and only if they are quasi-isomorphic in the corresponding triangulated category. The notations for categories associated to dg-algebras are explained in the next section. We refer to \cite{A19}, \cite{NS}, and \cite{N} for other unexplained notions.

	\section{Preliminaries}\label{preli}

\subsection{Modules over dg-algebras} 
We will work in the context of dg-algebras. We will follow
\cite{KDG} and \cite{KP} in these considerations. Fix a unital and commutative ground ring $k$.
Recall that a $k$-algebra is $\Z$-graded provided that it has a decomposition as a direct sum of $k$-submodules  $A=\bigoplus_{i\in\Z}A^i$ such that $A^iA^j\subseteq A^{i+j}$,
for all $i,j\in\Z$. Further a {\em dg-algebra}  is a $\Z$-graded algebra $A$ endowed with a $k$-linear differential $d:A\to A$ which is homogeneous of degree 1, that is $d(A^i)\subseteq A^{i+1}$ for all $i\in\Z$, and satisfies $d^2=0$ and the graded Leibniz rule:
	\[d(ab)=d(a)b+(-1)^iad(b)\hbox{, for all }a\in A^i\hbox{ and }b\in A.\]
A (right) \textit{dg-module} over $A$ is a $\Z$-graded module $M=\bigoplus_{i\in\Z}M^i$ (i.e., an $A$-module $M$ that admits, as an abelian group, the already mentioned decomposition such that $A^iM^j\subseteq M^{i+j}$) endowed with a $k$-linear square-zero differential $d:M\to M$, which is homogeneous of degree
1 and satisfies the graded Leibnitz rule:
	\[d(xa)=d(x)a+(-1)^ixd(a)\hbox{, for all }x\in M^i\hbox{ and }a\in A.\] 
 Left dg-$A$-modules are defined similarly. A morphism of dg-$A$-modules  is an $A$-linear map $f:M\to N$ such that $f(M^i)\subseteq N^i$ and $f$ commutes with the differential. In this way we obtained the category $\Mod{A,d}$ of all dg-$A$-modules.

If $A$ is a dg-algebra, then the dual dg-algebra $A\opp$ is defined as follows:
$A\opp=A$ as graded $k$-modules, $ab=(-1)^{ij}ba$ for all $a\in A^i$ and all $b\in A^j$ and the same differential $d$. It is clear that a left dg-$A$-module $M$ is a right dg-$A\opp$-module with the ``opposite" multiplication $xa=(-1)^{ij}ax$, for all $a\in A^i$ and all $x\in M^j$, henceforth we denote by $\Mod{A\opp,d}$ the category of left dg-$A$-modules.

It is not hard to see that an ordinary $k$-algebra $A$ can be viewed as a dg-algebra concentrated in degree $0$, case in which a dg-module is nothing else than a complex of ordinary (right) $A$-modules, that is $\Mod{A,d}$ is the category of all complexes (we will not use another special notation for the category of complexes).

\subsection{Canonical Functors}\label{sect:can-functors}
For a dg-module $X\in\Mod{A,d}$ one defines (functorially) the following $k$-modules \[\Zy^n(X)=\Ker(X^n\stackrel{d}\la X^{n+1}),\]
\[\Bd^n(X)=\Img(X^{n-1}\stackrel{d}\la X^{n}),\]
and \[\Ho^n(X)=\Zy^n(X)/\Bd^n(X),\] for all $n\in\Z$. We call $\Ho^n(X)$ the n-th cohomology $k$-module of $X$. A morphism of dg-modules is called \textit{quasi-isomorphism} if it induces
isomorphisms in cohomologies. The dg-module $X\in\Mod{A,d}$ is called \textit{acyclic} if $H^n(X)=0$ for all $n\in\Z$.
A morphism of dg-$A$-modules $f:X\to Y$ is called null--homotopic provided that there is a graded homomorphism $s:X\to Y$ of degree $-1$ such that
$f=sd+ds$.

The homotopy category $\Htp{A,d}$ has the same objects as $\Mod{A,d}$ and the morphisms are equivalence classes of morphims of dg-modules, modulo the homotopy. It is well--known that the homotopy category is triangulated. Moreover, a null--homotopic morphism is acyclic, therefore the functors $\Ho^n$ factor through $\Htp{A,d}$ for all $n\in\Z$.

The derived category $\Der{A,d}$ is obtained from $\Htp{A,d}$, by formally inverting all quasi-isomorphisms.

Let now $A$ and $B$ be two dg-algebras and let $U$ be a dg-$B$-$A$-\textit{bimodule} (that is $U$ is a dg-$B\opp\otimes_kA$-module).
For every $X\in\Mod{A,d}$ then we can consider the so called dg-Hom:
\[\Hom^\bullet_A(U,X)=\bigoplus_{n\in\Z}\Hom_A^n(U,X) \]
with $\Hom_A^n(U,X)=\prod_{i\in\Z}\Hom_{A^0}(U^i,X^{n+i})$, whose differentials are given by
$d(f)(x)=d_Yf(x)-(-1^n)fd_X(x)$ for all $f\in\Hom_A^n(X,Y)$. Then $\Hom^\bullet_A(U,X)$ becomes a dg-$B$-module, so we get a functor
\[\Hom^\bullet_A(U,-):\Mod{A,d}\to\Mod{B,d}.\]
It induces a triangle functor
	\[\Hom^\bullet_A(U,-):\Htp{A,d}\to\Htp{B,d}\]
This functor has a total right derived functor, defined as follows:
A dg-$A$-module $C$ it is called {\em cofibrant} if
$\Hom_{\Htp{A,d}}(C,N)=0$ for all acyclic dg-$A$-module $N$, or equivalently, $\Hom_{\Htp{A,d}}(C,X)=\Hom_{\Der{A,d}}(C,X)$ for every dg-$A$-module $X$. Then a \textit{cofibrant replacement} for $U$ is a cofibrant dg-module $U'$ together with a  quasi-isomorphism $U'\to U$. Dually we define the notions of \textit{fibrant object} and \textit{fibrant replacement}.
It turns out that (co)fibrant replacements always exist in $\Htp{A,d}$ (see \cite[Theorem 3.1]{KDG}), hence we can define
	\[\RHom_A(U,-):\Der{A,d}\to\Der{B,d},\]
by $\RHom_A(U,X)=\Hom^\bullet_A(U',X)$ where $U'$ is a cofibrant replacement of $U$. Note that it can be alternatively defined as $\RHom_A(U,X)=\Hom_A^\bullet(U,X')$ where $X'$ is a fibrant replacement of $X$, since $\Hom^\bullet_A(U',X)\cong\Hom^\bullet_A(U,X')$ in $\Der{B,d}$ (see \cite[Lemma 32.1]{dga} and its dual).

Let $Y\in\Mod{B,d}$. On the usual tensor product $Y\otimes_B U$  there is a natural grading:
\[Y\otimes^\bullet_BU=\bigoplus_{n\in\Z}Y\otimes_B^nU,\]
where $Y\otimes_B^nU$ is the quotient of $\bigoplus_{i\in\Z}Y^i\otimes_{B^0}U^{n-i}$
by the submodule generated by $y\otimes bu-yb\otimes u$ where $y\in Y^i$, $u\in U^{j}$ and $b\in B^{n-i-j}$, for all $i,j\in\Z$. Together with the differential
\[d(y\otimes u)=d(y)\otimes u+(-1)^iyd(u)\hbox{, for all }y\in Y^i, u\in U,\] we get a
dg-$A$-module inducing a functor
$-\otimes^\bullet_BU:\Mod{B,d}\to\Mod{A,d}$ and further a triangle functor
$-\otimes^\bullet_BU:\Htp{B,d}\to\Htp{A,d}.$ The left derived tensor product is defined
by $Y\dotimes_BU=Y'\otimes^\bullet_BU\cong Y\otimes^\bullet_BU'$ where $Y'$ and $U'$ are
cofibrant replacements for $Y$ and $U$ in $\Htp{B,d}$ and $\Htp{B\opp,d}$ respectively.
It induces a triangle functor
\[-\dotimes_BU:\Der{B,d}\to\Der{A,d}\] which is the left adjoint of $\RHom_A(U,-)$.

{We want to note here some basic properties of the derived functors, in order to use them freely whenever we need: 
\begin{enumerate}
\item Since $A$ and $B$ are cofibrant as seen as a dg-$A$-module, respectively dg-$B$-module, the very definition of the derived Hom and tensor functors gives us the isomorphisms: 
\[\RHom_A(A,X)\cong X\hbox{ and }B\dotimes_B U\cong U.\]
 
\item For $X\in\Mod{A,d}$ we have (e.g. see \cite[Section 12.2]{Y20}): \[\Ho^n\RHom(U,X)=\Hom_{\Der{A,d}}(U,X[n]).\] 

\item The functor $-\dotimes_BU$ is a left adjoint of $\RHom_A(U,-)$. The standard natural isomorphism \[\Hom_{\Der{A,d}}(-\dotimes_BU,-)\to \Hom_{\Der{B,d}}(-,\RHom_A(U,-))\] induces a natural quasi-isomorphism  \[\RHom_{A}(-\dotimes_BU,-)\to \RHom_{B}(-,\RHom_A(U,-)).\]
\end{enumerate}
}

\subsection{Triangulated categories and silting objects} 

{
Let $\calD$ be a triangulated category with the shift functor denoted by $[1]$, and let $\C$ be a subcategory of $\calD$. For every subset $I\subseteq\Z$ we define the full subcategories of $\calD$
	\[\C^{\perp_I}=\{X\mid \calD(C,X[i])=0\hbox{ for all }i\in I, \textrm{ and all }C\in\C\},\] \[^{\perp_I}\C=\{X\mid \calD(X,C[i])=0\hbox{ for all }i\in I, \textrm{ and all }C\in\C\},\] and $\C[I]=\bigcup_{i\in I}\C[i]$.
If $\C=\C[1]$ (that is, $\C$ is closed under all shifts) then clearly $\C^{\perp_\Z}=\C^{\perp_0}$ and we write simply $\C^\perp$ instead of both.
}

Consider an object $X\in\calD$. Following \cite{Wei-semi}, we say that  $X$ has the {\em $\C$-resolution dimension } (respectively {\em $\C$-coresolution dimension }) $\leq n$, and we write
$\dim_\C X\leq n$, ($\codim_\C X\leq n$)  provided that there  is a sequence of triangles
	\[ X_{i+1}\to C_i\to X_i\overset{+}\to\hbox{ with }0\leq i\leq n \]
\[\hbox{(respectively } X_i\to C_i\to X_{i+1}\overset{+}\to\hbox{ with }0\leq i\leq n\hbox{)}\] in $\calD$, such that
$C_i\in\C$, $X_0=X$ and $X_{n+1}=0$. We will write $\dim_\C X< \infty$ ($\codim_\C X<\infty$) if we can find a positive integer $n$ such that $\dim_\C X\leq n$ (respectively, $\codim_\C X\leq n$).

{ A subcategory $\T$ of $\calD$ is called \textit{thick subcategory} if it is a full triangulated subcategory (i.e., it is closed with respect to extensions and shifts) closed with respect to direct summands. For every class of objects $\C$ of $\calD$ there exists a smallest thick subcategory which contains $\C$, denoted by $\thick \C$.
We also denote by $\Thick{\C}$ the smallest thick subcategory containing $\C$ and also closed under arbitrary direct sums.

\begin{lem}\label{lem:thick}
Let $\calD$ be a triangulated category, $\C$ a set of objects from $\calD$. The following are equivalent for an object $A$:
\begin{enumerate}
    \item $A\in \thick C$;
    \item $A$ is a direct summand of an object $X$ such that $\dim_{\C[\Z]}X<\infty$;
    \item $A$ is a direct summand of an object $X$ such that $\codim_{\C[\Z]}X<\infty$.
\end{enumerate} 
Consequently, $\thick{\Add\C}=\Thick{\C}$. 
\end{lem}

\begin{proof}
  The equivalence between the three conditions follows by  \cite[Lemma 3.16]{NS}.  For the last equality, note that since direct sums of triangles are triangles, it follows immediately that the class of objects $X$ defined by the property  $\dim_{(\Add C)[\Z]}X<\infty$ is closed under direct summands. 
\end{proof}

\begin{cor}\cite[Corollary 3.17]{NS}
If $F:\calD_1\to \calD_2$ is an exact functor between triangulated categories, and $\C$ a set of objects of $\A$. Then $F(\thick \C)\subseteq \thick{F(\C)}.$    
\end{cor}

\begin{expl}\label{compact}
Suppose that $A$ is a dg-algebra. An object $X\in \Der{A,d}$ is compact if and only if $X\in \thick A$, \cite[4.3.4]{NS}.  
\end{expl}
} 

A \textit{t-structure} in the triangulated category $\calD$ is a pair $\mathbf{t}=(\U,\CV)$ of subcategories of $\calD$ such that 
\begin{enumerate}
    \item $\U={^{\perp_0}\CV}$, $\CV=\U^{\perp_0}$,
    \item for every object $X$ from $\calD$ there exists a triangle $U\to X\to V\to U[1]$ such that $U\in\U$ and $V\in \CV$, and 
    \item $\U[1]\subseteq \U$ (i.e., $\U$ is closed under positive shifts).
\end{enumerate}
In this case, the subcategory $\U\cap \CV[1]$ is an abelian category, called \textit{the heart} of $\mathbf{t}$. We recall that the morphisms $U\to X$ and $X\to V$ from (2) are unique up to isomorphism, and they can be used to construct \textit{truncation functors} $u:\calD\to \U$ and $v:\calD\to\CV$, and a cohomological functor $\Ho^0_\mathbf{t}=u(v(-[1])):\calD\to \U\cap \CV[1]$.  Note that under the presence of (1) and (2), the condition (3) is equivalent to $\CV[-1]\subseteq \CV$.
 
If in the definition of t-structures, we replace the condition (3) by $\U[-1]\subseteq \U$, then the pair $(\U,\CV)$ will be a \textit{co-t-structure}, and $\U[1]\cap \CV$ is called \textit{the co-heart} of this co-t-structure. 
 

An object $U\in \calD$ is called \textit{silting} if the pair $\mathbf{t}_U=(U^{\perp_{>0}},U^{\perp_{\leq 0}})$ is a t-structure. We record from \cite{A19} some basic properties of silting objects:

\begin{prop}\label{prop:basic-silting}
Suppose that $U$ is a silting object in the triangulated category $\calD$. 
\begin{enumerate}[{\rm (1)}]
    \item $\Hom_\calD(U,U[i])=0$ for all $i>0$.
    \item $U^{\perp_{\mathbb{Z}}}=0$ ($U$ is a weak generator for $\calD$).
    \item If $X\in U^{\perp_{>0}}$ then $X$ is a Milnor colimit $V_0\to V_1\to \dots \overset{f_n}\to V_n\to \dots$ such that $V_0$ is a direct sum of copies of $U$ and for every $n>0$ the cone of $f_n$ is a coproduct of copies of $U[n]$.
    \item if $\Hom_\calD(X[-1],U^{\perp_{>0}})=0$ and $X\in U^{\perp_{>0}}$ then $X\in \Add U$.
    \item If $\calD$ has coproducts then $\Ho^0_{\mathbf{t}_U}(U)$ is a projective generator for the heart of $\mathbf{t}_U$.  
    \item If $\calD$ is compactly generated then there exists a class $\X$ such that $(\X,U^{\perp_{>0}})$ is a co-t-structure.
\end{enumerate}
\end{prop}

\subsection{Coconnective dg-algebras}
Let $A=\bigoplus_{i\in\Z}A^i$ be a dg algebra. We say that $A$ is {\em non-positive} (see \cite[2.4]{KY}) if $A^i=0$ for all $i>0$. Moreover, $A$ is a {\em coconnective} a dg algebra if $\Ho^i(A)=0$ for all $i>0$, \cite[Remark 3.3.15]{Y20}. It is observed in \cite[Remark 11.4.39]{Y20} that if $A$ is a coconnective dg-algebra and $\tau^{\leq 0}(A)$ is its smart truncation at $0$ then $\tau^{\leq 0}(A)$ is a non-positive dg-algebra (by \cite[2.5]{KY}), and the embedding $\iota:\tau^{\leq 0}(A)\to A$ is a quasi-isomorphism. Then the derived extension of scalars functor $\iota^\star$, induced by $\iota$, is an equivalence between the corresponding derived categories, \cite[Theorem 12.7.2]{Y20}, whose quasi-inverse is the restriction functor extended to the corresponding derived categories.  

\begin{prop}\label{prop:w-non-pos-silt}
Let $A$ be a dg-algebra. Then $A$ is silting in $\Der{A,d}$ if and only if it is coconnective.    
\end{prop}
\begin{proof}
Suppose that $A$ is silting in $\Der{A,d}$. Then for all $i>0$ we have 
\[0=\Hom_{\Der{A,d}}(A,A[i])=\Ho^i(\RHom_A(A,A))=H^i(A).\]

Conversely, by the above considerations, we can assume that $A$ is non-positive.
There is a (standard) t-structure $(\Der{A,d}^{\leq 0},\Der{A,d}^{>0})$ in $\Der{A,d}$, \cite[Proposition 2.1]{KY}, where \[\Der{A,d}^{\leq 0}=\{X\in \Der{A,d}\mid \Ho^i(X)=0 \textrm{ for all } i> 0\}\] and \[\Der{A,d}^{>0}=\{X\in \Der{A,d}\mid \Ho^i(X)=0 \textrm{ for all }i\leq 0\}.\] It is easy to see, using the property (2) from Section \ref{sect:can-functors}, that $\Ho^i(X)\cong\Hom_{\Der{A,d}}(A,X[i])$, for all $i\in\Z$. Therefore, $\Der{A,d}^{\leq 0}=A^{\perp_{>0}}$ and $\Der{A,d}^{>0}=A^{\perp_{\leq 0}}$.
\end{proof}

\begin{rem} Suppose that $A$ is a coconnective dg algebra. 

1) The heart of the t-structure $(A^{\perp_{>0}},A^{\perp_{\leq 0}})$ consists of those objects $X$ such that $\Ho^i(X)=0$ for all $i\neq 0$, since we also have the equalities $A^{\perp_{>0}}=\Der{A,d}^{\leq0}$ and $A^{\perp_{\leq 0}}=\Der{A,d}^{>0}$. Hence this heart is equivalent to $\Mod{\Ho^0(A)}$.

2) The canonical functors $\Der{A,d}\to A^{\perp_{>0}}$ and $\Der{A,d}\to A^{\perp_{\leq0}}$ correspond to what are called ``smart truncations''  (that exist only in the case of coconnective algebras).

3) Since $\Der{A,d}$ is compactly generated, we also have a co-t-structure $(\X,A^{\perp_{>0}})$. The triangles induced by this (see condition (2) from the definition o co-t-structures), that are no longer functorial, can be interpreted as ``stupid truncations". 
\end{rem}

\begin{rem}\label{notation-coconective} 
Let $A$ be a coconnective dg algebra, and denote $R=H^0(A)$. If $\tau^{\leq 0}(A)$ is the smart truncation of $A$ then we have, besides the morphism $\iota:\tau^{\leq 0}(A)\to A$, also a canonical morphism $p:\tau^{\leq 0}(A)\to R$. We denote by $p^\ast$ and $\iota^\ast$ the derived of the associated induction functors and by  $p_\ast$ and $i_\ast$ the extensions of the restriction functors to the corresponding derived categories. Then  $\iota^\ast$ and $\iota_\ast$ are mutually inverse equivalences since $\iota$ is a quasi-isomorphism. Consequently, $p^\ast\iota_\ast$ is a left adjoint for $\iota^\ast p_\ast$. This procedure, which implies a reduction to the truncated dg-algebra, followed by the induction $p^\ast$ is also used in \cite{BMs} and \cite{XYZ}.   

Although $p^\ast$ and $p_\ast$ are not good enough, \cite{Sa22}, the restrictions of $p^\ast\iota_\ast$ and $\iota^\ast p_\ast$ to the hearts of the standard t-structures of $\Der{A,d}$ and $\Der{R}$ are equivalences of categories. 
For reader's convenience, we note that the subcategory $\overline{\Mod{R}}$ of $\Der{A,d}$ that consists in all objects $X$ with the property $\Ho^i(X)=0$ for all $i\neq 0$ is the heart of the standard t-structure on $\Der{A,d}$. 
\end{rem}




We also record the following basic properties that can be proved by using Proposition \ref{prop:w-non-pos-silt}.

\begin{cor} Let $A$ be a coconnective dg-algebra, and $U$ is an object in $\thick{\Add A}$. Then:
\begin{enumerate}[{\rm (a)}]
\item $\Ho^i(U)=0$ for $i\gg0$.
\item $\Ho^i(U)=0$ for all $i>0$ if and only if $\dim_{\Add A}U<\infty$. 
\end{enumerate}
\end{cor}

\begin{proof} 
Since $A$ is silting, the subcategory $\Add A$ is a silting subcategory in $\Thick{A}$ in the sense of \cite[Definition 4.1]{A19}. Then (a) follows by \cite[Proposition 4.3]{A19}. 

For (b) observe that if $\Ho^i(U)=0$ for $i>0$ is equivalent to $U\in A^{\perp_{>0}}$, so $\dim_{\Add A}U<\infty$ by \cite[Theorem 4.4]{A19}. Conversely if $\dim_{\Add A}U\leq n$, it follows by induction on  $n$ that $\Ho^i(U)=0$, for $i>0$.
\end{proof}

\section{Derived equivalences}\label{der-eq}

	\subsection{An $\RHom$-$\dotimes$ relation}

{We will present dg analogue for a well-known result in homological algebra, see \cite[Lemma 1.2.11 (c) and (d)]{GT06}. Note that in the original version, the module corresponding to $X$ here has to be injective, but we don't need this assumption more since we replace the usual $\Hom(-,X)$ functor with the derived one, and this functor is exact as a triangulated functor.}
 
\begin{prop}\label{p:ext-tor} Let $A$ and $B$ be two dg-algebras and let $U$ be a dg-$B$-$A$-bimodule. 
\begin{enumerate}[{\rm (a)}]
\item There is a morphism of dg $k$-modules \[\gamma_{X,Y}:\RHom_A(U,X)\dotimes_BY\to\RHom_A(\RHom_{B\opp}(Y,U),X)\] which is natural in both $X\in\Der{A,d}$ and $Y\in\Der{B\opp,d}$.
\item If $Y$ is compact in $\Der{B\opp,d}$ then the natural morphism $\gamma_{X,Y}$ defined above is an isomorphism.
\end{enumerate}
\end{prop}

\begin{proof}
(a) The left dg-$B$-module $Y$ can be regarded as a dg-$B$-$k$-bimodule, so there is a pair of adjoint functors
			\[\RHom_k(Y,-):\Der{k,d}\leftrightarrows\Der{B,d}:-\dotimes_BY.\]
Thus we have an isomorphism
			\[\Gamma_{M,Y,N}:\Hom_{\Der{B,d}}(M,\RHom_k(Y,N))\stackrel{\cong}\to\Hom_{\Der{k,d}}(M\dotimes_BY,N)\] which is natural in $M$, $Y$, and $N$. 
 
By \cite[Lemma 31.4]{dga} we can consider the natural morphism \[H^0:\RHom_A(U,-)\to \Hom_{\Der{A,d}}(U,-).\] 
{ We consider a cofibrant replacement $X'$ of $X$.} It follows that we can view $\RHom(U,X)=\Hom^\bullet_A(U,X')$. For every $f\in \RHom_A(U,X)$ we obtain, in $\Der{A,d}$, a morphism $H^0(f):U\to X$. Applying the functor $\RHom_A(\RHom_{B\opp}(Y,U),-)$ to this morphism, we obtain a morphisms in $\Der{k,d}$ of the form: 
$$\RHom_{A}(\RHom_{{B\opp }}(Y,U),U)\to \RHom_{A}(\RHom_{B\opp}(Y,U),X).$$

The natural morphism  $\mu_Y:Y\to\RHom_A(\RHom_{B\opp}(Y,U),U)$ lies originally in $\Der{B\opp,d}$. By using the restriction of the scalars, we can view it as a morphism from $\Der{k,d}$.
It follows that $\RHom_A(\RHom_{B\opp}(Y,U),H^0(-))\mu_Y$ is a morphism 
\[\RHom_A(U,X)\to \RHom_k(Y,\RHom_A(\RHom_{B\opp}(Y,U),X)).\]  

We define $\gamma_{X,Y}$ as \[\Gamma_{\RHom_A(U,X), Y,\RHom_A(\RHom_{B\opp}(Y,U),X)}(\RHom_A(\RHom_{B\opp}(Y,U),H^0(-))\mu_Y),\]
and the proof is complete.

(b) Since $\mu_B$ is an isomorphism, it can be checked that $\gamma_{X,B}$ is an isomorphism. In fact, $\gamma_{X,B}$ can be identified as the composition of the following natural isomorphisms \[\RHom_A(U,X)\dotimes_BB \overset{\cong}\to \RHom_A(U,X) \overset{\cong}\to \RHom_A(\RHom_{B\opp}(B,U),X).\]
   Moreover, since $\mu_{Y[1]}$ can be identified with $\mu_Y[1]$ for all $Y$, it follows that for all integers $\ell$, the morphisms $\gamma_{X,B[\ell]}$ are isomorphisms.
   
   Consider the full subcategory $\C$ of $\thick{B\opp}$ consisting of all $Y$ for which $\gamma_{X,Y}$ is an isomorphism. As we noted before, $B\in\C$.  Since all involved functors are additive and triangulated it follows that  $\C$ is thick, therefore
   $\thick{B\opp}\subseteq \C$.
   \end{proof}

\subsection{A general derived equivalence}

In the following we will present a part of a very general derived equivalence theorem proved in \cite{NS} that will be used to study silting complexes.

Let $U$ be a dg-$B$-$A$-bimodule. The total derived functors
\[\RHom_A(U,-):\Der{A,d}\leftrightarrows\Der{B,d}:-\dotimes_BU\]
form an adjoint pair.  Moreover, the total derived functors:
\[\RHom_A(-,U):\Der{A,d}\leftrightarrows\Der{B\opp,d}:\RHom_{B\opp}(-,U)\] form a right
adjoint pair (see \cite[Lemma 13.6]{KDU}). The associated natural morphisms will be denoted by:
\[\phi:\RHom_A(U,-)\dotimes_BU\to1_{\Der{A,d}}\hbox{ and }\] \[\psi:1_{\Der{B,d}}\to\RHom_A(U,-\dotimes_BU),\] respectively
\[\delta:1_{\Der{A,d}}\to\RHom_{B\opp}(\RHom_A(-,U),U)\hbox{ and }\] \[\mu:1_{\Der{B\opp,d}}\to\RHom_A(\RHom_{B\opp}(-,U),U).\]



\begin{thm}\label{good-equiv} Let $U$ be a dg-$B$-$A$-bimodule such that $U$ is compact in $\Der{B\opp,d}$ and $\delta_A$ is an isomorphism. 
\begin{enumerate}[{\rm (1)}]
\item The functor
$\RHom_A(U,-):\Der{A,d}\to\Der{B,d}$ is fully faithful. 

\item We have an equivalence of categories
\[\RHom_A(U,-):\Der{A,d}\stackrel{\sim}\rightleftarrows \Ker(-\dotimes_BU)^\perp:-\dotimes_BU.\]

\item If, in addition, { $U$ is compact in $\Der{A,d}$ too, and $\mu_B$ is an isomorphism}, then $\Ker(-\dotimes_BU)=0$, hence the categories $\Der{A,d}$ and $\Der{B,d}$ are equivalent.
\end{enumerate}
\end{thm}

\begin{proof}
(1) The statement is equivalent to the fact that the counit $\phi$ of the adjunction between $\RHom_A(U,-)$ and $U\dotimes_B-$ is an isomorphism.

Let $X\in\Der{A,d}$.  Since $U$ is compact in $\Der{B\opp,d}$, the natural morphism $\gamma_{X,U}$ constructed in  Proposition \ref{p:ext-tor} is an isomorphism. Then the composite natural morphism
\begin{align*}\RHom_A(U,X)\dotimes_BU\overset{\gamma_{X,U}}\longrightarrow&\RHom_A(\RHom_{B\opp}(U,U),X)\\ \overset{\cong}\longrightarrow&\RHom_A(A,X)\overset{\cong}\longrightarrow X,\end{align*}
is an isomorphism too. { Using the definition of $\gamma_{X,U}$ from the proof of Proposition \ref{p:ext-tor}, it follows that this composition can be identified with $\Gamma_{\RHom_A(U,X),U,X}(1_{\RHom_A(U,X)})=\phi_X$.} 
 

(2) It is clear that the fully faithful functor $\RHom_A(U,-)$ induces an equivalence between $\Der{A,d}$ and its essential image. Since the functor $\RHom_A(U,-)$ is the right adjoint of the functor $-\dotimes_BU$, it follows that \[\RHom_A(U,\Der{A,d})\subseteq \Ker(-\dotimes_BU)^\perp.\]
For the converse inclusion, the adjunction assures that for any $Y\in\Der{B,d}$ we have $(\psi_Y\dotimes_BU)\phi_{Y\dotimes_BU}=1_{\RHom_A(U,Y\dotimes_BU)\dotimes_BU}$. But $\phi_{Y\dotimes_BU}$ is known to be an isomorphism by the first part of this proof, hence
$\psi_Y\dotimes_BU$ is an isomorphism too. Therefore, if we complete $\psi_Y$ to a triangle
$$Z\to Y\overset{\psi_Y}\la \RHom_A(U,Y\dotimes_BU)\to Z[1],$$
we obtain $Z\dotimes_BU=0=Z[1]\dotimes_BU$, that is $Z,Z[1]\in\Ker(-\dotimes_BU)$.

If further we suppose $Y\in \Ker(-\dotimes_BU)^\perp$, then $\RHom_A(U,Y\dotimes_BU)\cong Y\oplus Z[1]$. But $\RHom_A(U,Y\dotimes_BU)\in\Ker(-\dotimes_BU)^\perp$, thus $Z[1]=0$, so $Y\cong\RHom_A(U,Y\dotimes_BU)$ is in the essential image of $\RHom_A(U,-)$.

(3) { Suppose that $U$ is also compact in $\Der{A,d}$ and that $\mu_B$ is an isomorphism. It follows \[B\cong\RHom_A(\RHom_{B\opp}(B,U),U)\cong\RHom_A(U,U).\]
Moreover, since $U\in\thick A$, we get 
\[B\in\RHom_A(\thick A,U)\subseteq\thick{\RHom_A(A,U)}=\thick U.\] 
Thus, if $Y\dotimes_BU=0$ then $Y\dotimes_BW=0$ for all $W\in\thick U$. In particular $Y\dotimes_BB=0$, hence $Y=0$, proving that $\Ker(-\dotimes_BU)=0$. }
\end{proof}

It is clear that in the above theorem $\Ker(-\dotimes_BU)$ is a localizing subcategory of $\Der{B,d}$. A direct application of Bousfield localization theory presented in \cite[Chapter 9]{N} gives us an equivalence of categories \[\Ker(-\dotimes_BU)^\perp\stackrel{\sim}\la\Der{B,d}/\Ker(-\dotimes_BU).\] Therefore, we have the following

\begin{cor}
Let $U$ be a dg-$B$-$A$-bimodule such that $U$ is compact in $\Der{B\opp,d}$ and $\delta_A$ is an isomorphism. Then there is an equivalence of categories 
$\Der{A,d}\stackrel{\sim}\la\Der{B,d}/\Ker(-\dotimes_BU).$ 
\end{cor}


If $B$ is the dg-endomorphism algebra $B=\DgEnd_A(U)$ of $U$, the hypothesis of Theorem \ref{good-equiv} can be replaced by a property of $U$ in $\Der{A,d}$.

\begin{prop}\label{p:thick-delta} Assume that $U\in \Der{A,d}$ and denote $B=\DgEnd_A(U)$. The following are equivalent:
\begin{itemize}
\item[a)] $A\in\thick U$;
\item[b)] $U\in\thick{B},$ and $\delta_A:A\to\RHom_{B\opp}(U,U)$ is an isomorphism in $\Der{A,d}$.
\end{itemize}
\end{prop}

\begin{proof} a)$\Rightarrow$b) 
For $A\in\thick U$, we get \begin{align*} U\cong\RHom_A(A,U)&\in\RHom_A(\thick U,U)\\ &\subseteq\thick{\RHom_A(U,U)}=\thick B.\end{align*}
 Moreover $\delta_U$ is the isomorphism (since $U\cong\RHom_{B\opp}(B,U)$), so $\delta_X$ is an isomorphism for all $X\in\thick U$, in particular $\delta_A$ is an isomorphism.

b)$\Rightarrow$a) If $U\in\thick B$ and $\delta_A$ is an isomorphism then  
\begin{align*} A\cong\RHom_{B\opp}(U,U)&\in\RHom_{B\opp}(\thick B,U)\\ &\subseteq\thick{\RHom_{B\opp}(B,U)}=\thick U.\end{align*}
\end{proof}

Recall that in many classical equivalence theorems, the existence of an equivalence between categories associated to right modules implies the existence of a similar equivalence for the categories associated to left modules. For instance, this phenomenon is present for  Morita equivalent rings or for derived equivalences induced by classical $n$-tilting modules, \cite[Theorem 1.5]{My} (see also the \cite[Section 1]{NS}).

\begin{cor}\label{cor:right-left-equiv}
Suppose that $U$ is a dg-$A$-module and $B=\DgEnd_A(U)$. If 
\begin{itemize}
\item[i)] $A\in\thick U$, and
\item[ii)] $B\in\thick{U}$
\end{itemize}
then the functors \[\RHom_A(U,-):\Der{A,d}\rightleftarrows\Der{B,d}:-\dotimes_B U\] and \[\RHom_{B\opp}(U,-):\Der{B\opp,d}\rightleftarrows\Der{A\opp,d}:U\dotimes_A-\] are equivalences of categories.
\end{cor}

\begin{rem}\label{rem:B=dg-endo}
Since $B=\DgEnd_A(U)$, the condition ii) can be replaced by $U\in \thick A$.
\end{rem}

\subsection{Complexes of finite (co)dimension}

For the reader's convenience, we present a more detailed result with the same flavor as Proposition \ref{p:thick-delta}.

\begin{prop}\label{l:delta-qis} If $U$ is a dg-$A$-module and $B=\DgEnd_A(U)$, the following are equivalent:
\begin{itemize}
\item[a)] $\codim_{\add U}A\leq n$;
\item[b)] $\dim_{\add B} U\leq n$ and	$\delta_A:A\to\RHom_{B\opp}(U,U)$ is an isomorphism in $\Der{A,d}$.
\end{itemize}
		\end{prop}

\begin{proof} a)$\Rightarrow$b) 
There exists a sequence of triangles in $\Der{A,d}$
		\[(\dagger)\ \ \  A_i\to U_i\to A_{i+1}\overset{+}\to\hbox{ with }0\leq i\leq n\]
		such that  $U_i\in\add U$, $A_0=A$ and $A_{n+1}=0$. Applying the exact functor $\RHom_A(-,U)$ we get triangles in $\Der{B\opp,d}$ of the form
		\[(\ddagger)\ \ \  V_{i+1}\to B_i\to V_i\overset{+}\to\hbox{ with }0\leq i\leq n \]
		with $B_i=\RHom_A(U_i,U)$ and $V_i=\RHom_A(A_i,U)$, for $0\leq i\leq n$.
		
Observing that $V_0=\RHom_A(A,U)\cong U$, $V_{n+1}=0$ and 
$\RHom_A(U,U)\cong B$, it follows that $B_i\in\add B$ for all $0\leq i\leq n$, we obtain $\dim_{\add B} U\leq n$.

Compare the triangles  $(\dagger)$ with the corresponding triangles obtained by applying the functor $\RHom_{B\opp}(-,U)$ to $(\ddagger)$:
\[\diagram
A_i\rto\dto^{\delta_{A_i}}  &U_i\rto\dto^{\delta_{U_i}}  & A_{i+1}\rto^{+}\dto^{\delta_{A_{i+1}}}   &\\
\RHom_{B\opp}(V_i,U)\rto&\RHom_{B\opp}(B_i,U)\rto&\RHom_{B\opp}(V_{i+1},U)\rto^{\ \ \ \ \ \ \ +}&
\enddiagram.\]
It is clear that $\delta_U$ is the isomorphism $U\cong\RHom_{B\opp}(B,U)$, therefore $\delta_{U_i}$, $0\leq i\leq n$ are isomorphisms too. 
Moreover $A_{n+1}=0=\RHom_A(V_{n+1},U)$, hence we deduce $\delta_{A_i}$ are isomorphisms in $\Der{A,d}$ for all $0\leq i\leq n$. For $i=0$ we obtain exactly our conclusion, namely that $\delta_A$ is a (quasi-)isomorphism.

b)$\Rightarrow$a) It is enough to start with a sequence of triangles $(\ddagger)$ as before, and to apply the functor $\RHom_{B\opp}(-,U)$. 
\end{proof}

\begin{rem}
 Using a similar technique it also follows that if $\dim_{\add A} U\leq m$ for some $m\in\N$, then $\codim_{\add U} B\leq m$.   
\end{rem}

\section{Good silting complexes}\label{s:gsc}

In this section, we will apply Theorem \ref{good-equiv} to obtain details about the equivalences induced by silting complexes over coconnective dg-algebras.

\subsection{Silting complexes over coconnective dg-algebras} 	

Let $A$ be a coconnective dg-algebra. From \cite[Example 4.7(1)]{A19} and Remark \ref{rem:B=dg-endo} it follows that every compact silting object $U$ from $\Der{A,d}$ satisfies the hypothesis of Theorem \ref{good-equiv} since $\thick{U}=\thick{A}$.   

In what follows, we will study equivalences that are induced by some silting objects that generalize the compact ones. 
Following the same terminology as that used in \cite[Section 5]{A19} and \cite{Wei-semi} for the case of ordinary rings, we will say that a dg-$A$-module is a {\em silting complex} if $\Add{U}\subseteq U^{\perp_{>0}}$ and $\Thick{U}=\Thick A$. Note that for every silting complex, all its shifts are also silting complexes. It follows that we can assume w.l.o.g. that $U\in\Der{A,d}^{\leq 0}$. Using \cite{Br23}, cosilting complexes can be characterized as in the following:

\begin{thm}\label{thm:bounded-silting} 
Suppose that $A$ is a coconnective dg-algebra.
The following are equivalent for a object $U\in\Der{A,d}^{\leq 0}$:
\begin{enumerate}[{\rm (1)}]
    \item $\Add U\subseteq U^{\perp_{>0}}$ and $\Thick A=\Thick U$ (i.e., $U$ is a silting complex);
    \item $U$ is silting and there exists a positive integer $n$ such that 
    \[\Der{A,d}^{\leq -n}\subseteq U^{\perp_{>0}}\subseteq \Der{A,d}^{\leq 0};\]
    \item \begin{itemize}
    \item[(S1)] $\dim_{\Add A}U<\infty$,
    \item[(S2)] $\Add U\subseteq U^{\perp_{>0}}$,
    \item[(S3)] $\codim_{\Add U}A<\infty$.
    \end{itemize}
\end{enumerate}
\end{thm}

If in Theorem \ref{thm:bounded-silting} $n$ is the smallest positive integer such that the condition (1) is satisfied then we will say that $U$ is an \textit{$n$-silting complex}. If $U$ verifies only the conditions (S1) and (S2) then it is called \textit{presilting}. The reader can find more properties of these objects in \cite{ZW}.

\begin{cor}\label{c:bounded-silting}
A compact dg-module $U\in\Der{A,d}^{\leq 0}$ is a silting complex if and only if the following conditions are satisfied:
\item \begin{itemize}
    \item[(s1)] $\dim_{\add A}U<\infty$,
    \item[(s2)] $U\in  U^{\perp_{>0}}$,
    \item[(s3)] $\codim_{\add U}A<\infty$.
    \end{itemize}
\end{cor}

\begin{proof}
Suppose that $U$ is silting. From \cite[Example 4.7]{A19} it follows that $U$ is silting in $\thick A$. The conclusion (s1) follows from \cite[Lemma 3.8]{A19}, \cite[Proposition 2.17]{AiI} and \cite[Proposition 2.7]{IyY}. The same results, combined with the fact that there exists a positive integer $n$ such that $A[n]\in U^{\perp_>0}$, can be used to obtain (s3). 

The converse implication follows from Theorem \ref{thm:bounded-silting}.
\end{proof}

Therefore, using Corollary \ref{cor:right-left-equiv}, since $A\opp$ is silting, we can state a generalization of \cite[Proposition 1.6]{My}:

\begin{cor}
If $U$ is a compact silting complex as a right dg-module over the coconnective dg-algebra $A$ and $B$ is the dg-endomophism algebra of $U$, then $U=U\dotimes_A A\opp$ is also a compact silting complex as a left dg-module over $B$.     
\end{cor}


\subsection{Equivalences induced by good silting complexes}

A silting complex is called {\em good} if the condition (S3) above can be replaced by the strongest condition (s3).
Two silting complexes $U$ and $U'$ are {\em equivalent} if	$\Add U=\Add{U'}$ or, equivalently, if they induce the same t-structure (see \cite[Definition 4.6]{PV}).

\begin{lem} If $U$ is a silting complex then there is a good silting complex $U'$, such that $U$ and $U'$ are equivalent. 
\end{lem}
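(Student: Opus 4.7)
The strategy is the standard one (going back to Bazzoni in the tilting setting): since the failure of $U$ to be ``good'' is exactly that the objects appearing in the coresolution of $A$ by $\Add U$ need not lie in $\add U$, one simply enlarges $U$ by those missing summands.

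More precisely, since $U$ is $n$-silting we have $\codim_{\Add U}A\le n$, so there is a sequence of triangles
\[
A_i\to U_i\to A_{i+1}\overset{+}{\to}\quad(0\le i\le n)
\]
in $\Der{A,d}$ with $U_i\in\Add U$, $A_0=A$ and $A_{n+1}=0$. I would set
\[
U':=U\oplus U_0\oplus U_1\oplus\cdots\oplus U_n,
\]
and then verify in order: (a) $\Add U'=\Add U$, (b) $\codim_{\add U'}A\le n$, (c) $U'$ satisfies S1 and S2, and finally (d) conclude that $U'$ is $n$-silting.

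For (a) the inclusion $\Add U'\subseteq\Add U$ holds since each $U_i\in\Add U$ by construction, and the reverse inclusion holds since $U$ is a direct summand of $U'$. Part (b) is immediate: the same family of triangles $A_i\to U_i\to A_{i+1}\overset{+}{\to}$ now witnesses the coresolution, because each $U_i$ is a direct summand of $U'$ and hence lies in $\add U'$. For S2 in (c), using $\Add U'=\Add U$ one has $(U')^{(I)}\in\Add U$ and also $U'\in\Add U$; since $U^{(J)}\in U^{\perp_{>0}}$ for every set $J$, any object in $\Add U$ has vanishing positive Ext from $U$, and then from any summand of $U^{(K)}$, so $\Hom_{\Der{A,d}}(U',(U')^{(I)}[i])=0$ for all $i>0$. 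For S1, the point is that $\langle\Add A\rangle$ is thick, and by \cite[Proposition~2.5]{Wei-semi} it equals $\{X:\dim_{\Add A}X<\infty\}[\mathbb{Z}]$; taking direct sums of the triangles defining $\dim_{\Add A}U\le n$ shows $\Add U\subseteq\langle\Add A\rangle$, so in particular $U'\in\langle\Add A\rangle$ and $\dim_{\Add A}U'<\infty$.

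At this point $U'$ is silting, satisfies the strong coresolution condition s3, and has $\Add U'=\Add U$, so $U$ and $U'$ are equivalent in the sense introduced just above the lemma. Finally, by the equivalence of $\dim_{\Add A}U'\le n$ with $\codim_{\Add U'}A\le n$ for silting objects (\cite[Proposition~3.9]{Wei-semi}), step (b) promotes S1 to $\dim_{\Add A}U'\le n$, so $U'$ is $n$-silting and good. The only mildly delicate point is (c): one must not confuse the bound $\codim_{\add U'}A\le n$ (which is what we actually have) with $\dim_{\Add A}U'\le n$, and the latter is obtained not directly but via the silting equivalence of dimension and codimension quoted above.
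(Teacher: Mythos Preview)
Your proof is correct and follows essentially the same route as the paper: take $U'$ to be the direct sum of the terms $U_i$ appearing in an $\Add U$-coresolution of $A$, observe that each $U_i\in\add U'$ so the same triangles witness goodness, and check that $U'$ is $n$-silting equivalent to $U$. The only cosmetic difference is that you also adjoin $U$ itself as a summand of $U'$ (making $\Add U'=\Add U$ immediate) and verify S1, S2 by hand, whereas the paper sets $U'=\bigoplus_{i=0}^n U_i$ without $U$ and cites \cite[Proposition~3.9]{Wei-semi} as a single package for both the silting property and the equality $\Add U'=\Add U$.
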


\begin{proof}
Assume that $\codim_{\Add U} A\leq n$. There are triangles
\[A_i\to U_i\to A_{i+1}\overset{+}\to\hbox{ with }0\leq i\leq n\]
such that  $U_i\in\Add U$, $A_0=A$ and $A_{n+1}=0$. 
If $U'=U\oplus \left(\bigoplus^n_{i=0}U_i\right)$ then $\Add{U'}=\Add U$. Hence $U'$ is a silting complex that is equivalent to $U$. It is clear that
$U'$ is good since all $U_i$ are direct summand of $U'$.
\end{proof}

In the following, we will use the following
\begin{sett}\label{Set:coconnective-silt}
Assume that $A$ is a coconnective dg-algebra, and denote $R=\Ho^0(A)$. Let $\overline{\Mod{R}}$ be the subcategory of $\Der{A,d}$ of all objects $X$ with the property $\Ho^i(X)=0$ for all $i\neq 0$ i.e., the heart of the standard t-structure on $\Der{A,d}$. Therefore, $\overline{\Mod{R}}$ is equivalent to the category $\Mod{R}$ of all (right) $R$-modules.  

We also consider a good silting complex $U\in\Der{A,d}$. We will denote the dg-endomorphism algebra of $U$ by $B=\DgEnd_A(U)$, and the endomorphism ring of $U\in\Der{A,d}$ will be $E=\Hom_{\Der{A,d}}(U,U)=\Ho^0(B)$. 
\end{sett}

According to Proposition \ref{l:delta-qis}, $U$ is compact as a dg-$B\opp$-module. We can apply Theorem \ref{good-equiv} to obtain an equivalence of categories
		\[\RHom_A(U,-):\Der{A,d}\leftrightarrows\K^\perp:-\dotimes_BU,\]
where $\K=\Ker(-\dotimes_BU)$.

Consequently, we have the following version of \cite[Lemma 31.4]{dga}.

\begin{lem}\label{lem:dga31.4} 
Assume that we are in the Setting \ref{Set:coconnective-silt}. Then for every dg-$A$-module $X$ we have an isomorphism of $E$-modules
$H^i(\RHom_A(U,X))\cong \Hom_{\Der{A,d}}(U,X[i])$ that is functorial in $X$.
\end{lem}

\begin{proof}
This follows from the isomorphisms:
\begin{align*}
H^i(\RHom_A(U,X))&\cong H^i(\RHom_B(B,\RHom_A(U,X)))\\ &\cong \Hom_{\Der{B,d}}(B,\RHom_A(U,X)[i])\\
& \cong \Hom_{\Der{B,d}}(\RHom_A(U,U),\RHom_A(U,X[i])) \\
& \cong \Hom_{\Der{A,d}}(U,X[i]).
\end{align*}
\end{proof}

Hence, we obtain the following basic properties:

\begin{lem}\label{lem:basic-prop-silt}
Using the notations from Setting \ref{Set:coconnective-silt}, the following are true:
\begin{enumerate}[{\rm (a)}]
\item The dg-algebra $B$ is coconnective.
\item The heart, $\overline{\Mod{E}}$, of the standard t-structure on $\Der{B,d}$ is equivalent to the category $\Mod E$;
\item If $\mathcal{H}_U$ is the heart of the t-structure induced by $U$ then $\RHom_A(U,\mathcal{H}_U)=\overline{\Mod E}\cap \K^\perp$.
\item Using the notations and identifications from Remark \ref{notation-coconective}, we have the equality $\Ho^0(\overline{\Mod{E}}\cap \K^\perp)=\Mod{E}\cap p^\ast\iota_\ast(\K^\perp).$ 
\end{enumerate}
\end{lem}

\begin{proof}
The statements (a), (b), and (c) can be obtained by using the equivalence induced by $U$ and Lemma \ref{lem:dga31.4}. The proof for (d) can be obtained by adapting that of \cite[Lemma 4.3.2]{BMs}. 
\end{proof}

We will see $\Mod R$ and $\Mod E$ as subcategories in the corresponding derived categories $\Der{R}$, respectively $\Der{E}$. If $i$ is an integer, we denote
		\[\X_i=\{X\in\Mod R\mid \Hom_{\Der{R}}(U,X)[j])=0\hbox{ for all }j\neq i\},\]
and	\[\Y_i=\{Y\in\Mod E\mid \Ho^j(\iota^\ast p_\ast(Y)\dotimes_BU)=0\hbox{ for all }j\neq -i\}.\]

%

\begin{prop}\label{prop:counter-equiv-general}
Let $i$ be an integer. Then we have an equivalence of categories $\Hom_{\Der R}(U,(-)[i]):\X_i\to \Y_i\cap p^\ast\iota_\ast(\K^\perp),$ whose quasi-inverse is $H^{-i}(\iota^\ast p_\ast (-)\dotimes_{B}U)$. 
%
\end{prop}

\begin{proof}
We consider the classes 
\[\overline{\X}_i=\{X\in \overline{\Mod{R}}\mid \Ho^j(\RHom_A(U,X))=0\hbox{ for all }j\neq i\},\]
and		\[\overline{\Y}_i=\{Y\in\overline{\Mod{E}}\mid \Ho^j(Y\dotimes_BU)=0\hbox{ for all }j\neq -i\}.\]

For $X\in\overline{\X}_i$ we put $Y=\RHom_A(U,X[i])$.
Then for all $j\neq0$ we have  
\begin{align*}
\Ho^{j}(Y)&=\Ho^j(\RHom_A(U,X[i]))\cong\Ho^j(\RHom_A(U,X)[i])\\  &
			\cong\Ho^{j+i}(\RHom_A(U,X))=0,
		\end{align*} 
hence $Y\in\overline{\Mod{E}}$.

Since the natural map $\RHom_A(U,X[i])\dotimes_BU\to X[i]$ is a quasi-isomorphism, for all $j\neq -i$ we have
\[\Ho^j(Y\dotimes_BU)=\Ho^j(\RHom_A(U,X[i])\dotimes_BU)\cong\Ho^j(X[i])=\Ho^{j+i}(X)=0,\] hence  $Y\in\overline\Y_i$.  
On the other hand $Y=\RHom_A(U,X[i])\in\K^\perp$, thus $Y\in\overline\Y_i\cap\K^\perp$.

Conversely, if $Y\in\overline\Y_i\cap\K^\perp$, we consider the object $X=Y[-i]\dotimes_BU$. For all $j\neq0$ we have
\[\Ho^j(X)=\Ho^j(Y[i]\dotimes_BU)\cong\Ho^j((Y\dotimes_BU)[i])=\Ho^{j-i}(Y\dotimes_BU)=0,\] hence $X\in\overline{\Mod{R}}$. 

Moreover, since $Y\in\K^\perp$ and $\K^\perp$ is closed under all shifts, we obtain $Y[-i]\in\K^\perp$. It follows that the natural map
$Y[-i]\to\RHom_A(U,Y[-i]\dotimes_BU)$ is a quasi-isomorphism. Since for all $j\neq i$ we have
		\begin{align*}\Ho^j(\RHom_A(U,X))&=\Ho^j(\RHom_A(U,Y[-i]\dotimes_BU)\\ &\cong\Ho^j(Y[-i])=\Ho^{j-i}(Y)=0,
		\end{align*}
we conclude that $X\in\overline{\X}
_i$.

It follows that the functors $\RHom_A(U,-)$ and $-\dotimes_B U$ induce the equivalence:
\[\RHom_A(U,-)[i]:\overline{\X}_i\leftrightarrows\overline{\Y}_i\cap\K^\perp:(-\dotimes_BU)[-i].\]
Using Lemma \ref{lem:basic-prop-silt} and Lemma \ref{lem:dga31.4}, we obtain the conclusion by applying the equivalences described in Remark \ref{notation-coconective}. 
%
	\end{proof}

\begin{rem}
If $\dim_{\Add{A}}U\leq n$, Proposition \ref{prop:counter-equiv-general} gives us non-trivial information only if $0\leq i\leq n$ since $\Ho^i(\RHom_A(U,\overline{\Mod{R}}))=0$ for all $i\notin [0,n]$. Since $U^{\perp_\Z}=0$, we conclude that for all $i\notin [0,n]$ we have $\overline{\X}_i=0$.      
\end{rem}

\begin{rem}
If $U$ is compact in $\Der{A,d}$, then $\K^\perp=\Der{B,d}$, hence Proposition \ref{prop:counter-equiv-general} provides us equivalences between the classes $\X_i$ and $\Y_i$. 
\end{rem}

Proposition \ref{prop:counter-equiv-general} can be applied to provide some counter equivalences already described in the literature. For instance, it can be used to $2$-silting complexes over classical rings to obtain \cite[Theorem 4.3.1]{BMs} for non-compact silting complexes of length $2$, respectively \cite[Theorem 1.1]{Buan} and \cite[Theorem 2.15]{HKM} for the compact case (see also \cite{XYZ}). Of course, all these generalize the classical Brenner-Butler Theorem.  In the following we will give some details for the case on $n$-tilting modules, in order to obtain the Tilting Theorems proved in \cite{BMT} and \cite{My}.

\subsection{Tilting equivalences in module categories}
 Let $A$ be an ordinary $k$-algebra. As mentioned, $A$ may be seen as a dg-algebra concentrated in degree 0. 
 As usual, $\Mod A$ is the category of ordinary (right) modules over $A$.
We keep the notation $\Mod{A,d}$ for the category of complexes of $A$-modules but we denote simply $\Htp A$ and $\Der A$ the respective homotopy and derived category.

An ($n$-)silting complex $U\in\Der{A}$ is called {\em ($n$-)tilting} if, instead condition
$(S2)$, it satisfies the strongest condition $U^{(I)}\in U^{\perp_{\neq0}}$ for all sets $I$.


According to \cite[Corollary 3.6 and Corollary 3.7]{Wei-semi} $U\in\Der{A}$ is tilting exactly if  it is isomorphic to its 0-th cohomology, that is to $T=\Ho^0(U)\in\Mod{A}$ (actually $U$ is a projective resolution of $T$), and the $A$-module $T$
satisfies the usual properties defining a tilting module, namely
\begin{enumerate}[T1.]
		\item $T$ is of finite projective dimension.
		\item $\Ext^k_A(T,T^{(I)})=0$ for all sets $I$ and all positive integers $k$.
		\item There is an exact sequence $0\to A\to T_0\to\ldots\to T_n\to 0$ with $T_i\in\Add T$ for all $0\leq i\leq n$.
\end{enumerate}
Moreover, in this case $\Hom_{\Der{A}}(U,U)\cong\Hom_{\Der{A}}(T,T)=\Hom_A(T,T)$. We note that the tilting module $T$ is good in the sense of \cite{BMT} exactly if the corresponding tilting object $U\in\Der{A}$ is good in the sense used in the present paper. A tilting module $T$ in $\Mod{A}$ is called {\it classical}, provided that it has a projective resolution $U$ such that all its entries are finitely generated projective modules i.e., $U$ is a compact tilting object in $\Der{A}$.

\begin{thm}\label{BMT}\cite[Theorem 2.2 and Corollary 2.4]{BMT}.
Let $T\in\Mod{A}$ be a good tilting module and let $E=\End_A(T)$. Then $T$ is a $E$-$A$-bimodule and there is an isomorphism of $k$-algebras $A\cong\End_{E\opp}(T)$. 

Further, the adjunction morphism $\phi:\RHom_A(T,X)\dotimes_AU\to X$ is an isomorphism for all $X\in\Der A$, hence the functor	\[\RHom_A(T,-):\Der{A}\to\Der{E}\]  is fully faithful. Consequently, it induces equivalences of categories
		\[\Der{A}\stackrel{\sim}\la\Der{E}/\K\stackrel{\sim}\la
		\K^\perp,\] where $\K=\Ker(-\dotimes_BU)$. 
  
Moreover $\K^\perp=\Der{E}$, provided that $T$ is classical tilting.
\end{thm}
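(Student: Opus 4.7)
The plan is to derive this as a direct specialization of Theorem \ref{eq-silt} (hence of Theorem \ref{good-equiv}) to an ordinary algebra. First, I would view $A$ as a dg-algebra concentrated in degree $0$ and replace $T$ by a projective resolution $U\in\Der{A}$, which is cofibrant as a dg-$A$-module. The discussion preceding the theorem identifies good tilting modules with good tilting (hence silting) objects in $\Der{A,d}$ in the sense of Section \ref{good-silting}: conditions T1 and T3 on $T$ correspond to S1 (equivalently, $\dim_{\Add A}U<\infty$) and to the good condition s3, while T2 strengthens S2 to $U^{(I)}\in U^{\perp_{\neq0}}$ for every set $I$.

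The crucial reduction is to compare the dg-endomorphism algebra $B':=\DgEnd_A(U)$ with the ordinary ring $B=\End_A(T)$. Because $T$ is tilting, the cohomology computation used in the proof of Theorem \ref{eq-silt} gives
\[ \Ho^i(B')\cong \Hom_{\Der{A}}(U,U[i])\cong \Ext^i_A(T,T), \]
which vanishes for $i\neq0$ and equals $B$ for $i=0$. Thus $B'$ is quasi-isomorphic to $B$ regarded as a dg-algebra concentrated in degree $0$, and the standard invariance of derived categories under quasi-isomorphism of dg-algebras (as used in Section \ref{good-silting}) identifies $\Der{B',d}$ with the ordinary derived category $\Der{B}$ in a way that carries the adjoint pair $(\RHom_A(U,-),-\dotimes_{B'}U)$ to $(\RHom_A(T,-),-\dotimes_BT)$. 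This is the step I expect to require the most care, since one must verify that both the derived Hom and the derived tensor transport coherently along the quasi-isomorphism $B'\to B$.

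With these identifications in place, Theorem \ref{eq-silt} immediately supplies the equivalence $\Der{A}\simeq \K^\perp$ implemented by $\RHom_A(T,-)$ and $-\dotimes_BT$, and in particular the bijectivity of the counit $\phi$ and the full-faithfulness of $\RHom_A(T,-)$; the identification $\Der{A}\simeq\Der{B}/\K$ is the content of the Remark after Theorem \ref{good-equiv}. The bimodule isomorphism $A\cong\End_{B\opp}(T)$ is obtained by taking $\Ho^0$ of the quasi-isomorphism $\delta_A\colon A\to\RHom_{B'\opp}(U,U)$ provided by the first assertion of Theorem \ref{good-equiv}, using that $U$ is a projective resolution of $T$ to identify $\Ho^0\RHom_{B\opp}(T,T)$ with $\End_{B\opp}(T)$. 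Finally, if $T$ is classical tilting then $U$ may be chosen as a bounded complex of finitely generated projectives, hence small, so $\dim_{\add A}U<\infty$ and the last clause of Theorem \ref{good-equiv} forces $\K=0$, giving $\K^\perp=\Der{B}$.
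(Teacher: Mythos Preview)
Your proposal is correct and follows essentially the same route as the paper: take a projective resolution $U$ of $T$, compute that $\DgEnd_A(U)$ has cohomology concentrated in degree $0$ equal to $B=\End_A(T)$ so that $\Der{\DgEnd_A(U),d}\simeq\Der{B}$, then read off everything from Theorem~\ref{eq-silt} and Theorem~\ref{good-equiv}. Your extra caution about transporting the adjoint pair along the quasi-isomorphism $B'\to B$ is well placed, but the paper handles this tersely by simply identifying $\Mod{\DgEnd_A(U),d}$ with the category of complexes of $B$-modules.
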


\begin{proof}
In order to apply the above results, denote by $U$ a projective resolution of $T=\Ho^0(U)$, that is $U\in\Der{A}$ is a good tilting object. Then
\[\Ho^n(\DgEnd_A(U))=\Ho^n(\RHom_A(U,U))\cong\Hom_{\Der{A}}(U,U[n])=0\] for $n\neq0$, hence $\DgEnd_A(U)$
is concentrated in degree 0. Since
\[\Ho^0(\DgEnd_A(U))=\Hom_{\Der{A}}(U,U)\cong\Hom_A(T,T)=E,\]  the quasi-isomorphism $A\to\RHom_{B\opp}(U,U)$ of Theorem \ref{good-equiv} becomes an isomorphism of $k$-algebras $A\to\End_{E\opp}(T)$.

Finally \[\Mod{\DgEnd_A(U),d}=\Mod{E,d}\] is the category of differential complexes of $E$-modules,
\[\Htp{\DgEnd_A(U),d}=\Htp B \hbox{ and }\Der{\DgEnd_A(U),d}=\Der E,\] and the conclusion follows by Theorem \ref{good-equiv}.
\end{proof}

\begin{cor}\cite[Corollary 2.5]{BMT}, \cite[Theorem 1.16]{My} With the assumptions and notations made in Theorem \ref{BMT}, we have the equivalences of categories
\[\Ext_A^i(T,-):\X_i\leftrightarrows\Y_i\cap\K^\perp:\Tor_i^E(Y,-),\]
where $\X_i=\{X\in\Mod{A}\mid\Ext_A^j(T,X)=0\hbox{ for all }j\geq0, j\neq i\}$ and $\Y_i=\{Y\in\Mod{E}\mid\Tor_j^B(T,Y)=0\hbox{ for all }j\geq0,j\neq i\}$. 
%
\end{cor}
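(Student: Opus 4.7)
The plan is to deduce this directly from Corollary \ref{eq-silt-mod} applied to the good tilting object $U \in \Der A$, which is a projective resolution of $T$. The core idea is simply to translate the derived cohomological language of that corollary into classical $\Ext$ and $\Tor$ groups using the fact that $U$ represents $T$ in the derived category. Since $T$ has finite projective dimension by property $T1$, the dg-algebra $B = \DgEnd_A(U)$ is concentrated in degree $0$ (as already observed in the proof of Theorem \ref{BMT}), so $\Der{B,d} = \Der B$ and $\Mod B$ is literally the heart of the standard $t$-structure.

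First I would identify the subcategories. Because $U$ is a projective resolution of $T$ (concentrated in non-positive degrees), for any ordinary module $X \in \Mod A$ one has $\RHom_A(U,X) \cong \Hom_A^\bullet(U,X)$ concentrated in non-negative cohomological degrees, with $\Ho^j(\RHom_A(U,X)) \cong \Ext_A^j(T,X)$ for all $j \geq 0$. Hence the condition in Notation \ref{X&Y} that $\Ho^j(\RHom_A(U,X)) = 0$ for $j \neq i$ is equivalent, for $X \in \Mod A$, to $\Ext_A^j(T,X) = 0$ for all $j \geq 0$, $j \neq i$. This shows that the $\X_i$ of the corollary coincides with the $\X_i$ of Notation \ref{X&Y}. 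The analogous identification for $\Y_i$ uses $\Ho^{-j}(Y \dotimes_B U) \cong \Tor_j^B(Y,T)$ and the vanishing of negative Tor.

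Next I would match the functors. For $X \in \X_i$, the complex $\RHom_A(U,X)[i]$ is concentrated in degree $0$ with $\Ho^0(\RHom_A(U,X)[i]) = \Ho^i(\RHom_A(U,X)) \cong \Ext_A^i(T,X)$, so under the identification of the heart with $\Mod B$ (where $B = \End_A(T)$ as in Theorem \ref{BMT}), the functor $\RHom_A(U,-)[i]$ agrees with $\Ext_A^i(T,-)$ on $\X_i$. Symmetrically, for $Y \in \Y_i$, the complex $(Y \dotimes_B U)[-i]$ has cohomology concentrated in degree $0$ equal to $\Tor_i^B(Y,T)$, so $(- \dotimes_B U)[-i]$ agrees with $\Tor_i^B(-,T)$ on $\Y_i$. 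Then Corollary \ref{eq-silt-mod} immediately gives the stated equivalences $\X_i \leftrightarrows \Y_i \cap \K^\perp$; in the classical tilting case, the small-silting addendum of that corollary yields $\K^\perp = \Der B$, so $\Y_i \cap \K^\perp = \Y_i$.

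The only real friction point is bookkeeping with signs and shifts of cohomological indices, in particular verifying that the complex representing $Y \dotimes_B U$ is indeed concentrated in non-positive degrees (so that only Tor in non-negative degrees appears) and that the quasi-isomorphism $U \to T$ transports $\RHom$ and $\dotimes$ to the classical derived functors of $\Hom_A(T,-)$ and $-\otimes_B T$. These are standard consequences of the fact that $U$ is a bounded complex of projectives, but they deserve explicit mention so that the reader sees how the derived statement of Corollary \ref{eq-silt-mod} specializes cleanly to the module-theoretic statement of \cite[Corollary 2.5]{BMT} and \cite[Theorem 1.16]{My}.
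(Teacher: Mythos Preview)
Your proposal is correct and follows essentially the same route as the paper: apply Corollary \ref{eq-silt-mod} to the projective resolution $U$ of $T$, then use the identifications $\Ho^j(\RHom_A(U,X))\cong\Ext_A^j(T,X)$ and $\Ho^{-j}(Y\dotimes_BU)\cong\Tor_j^B(Y,T)$ to see that the classes $\X_i,\Y_i$ of Notation \ref{X&Y} agree with those here and that the shifted derived functors restrict to $\Ext_A^i(T,-)$ and $\Tor_i^B(-,T)$. The paper's proof is just a terser version of your argument; your added remarks on why $\RHom_A(U,X)$ and $Y\dotimes_BU$ are concentrated in the expected half-line are helpful but not a departure in strategy.
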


\begin{proof}
In order to apply Corollary \ref{prop:counter-equiv-general} we note that in our case $E=\End_A(T)$  is isomorphic to the endomorphism ring of the projective resolution $U$ of $T$ in the category $\Der{A}$. Moreover by definitions of  $\X_i$ and $\Y_i$ we deduce that $\RHom_A(T,X[i])$ and $Y[-i]\dotimes_BT$ are both concentrated in degree $0$, for all $X\in\X_i$ and all $Y\in\Y_i$.

Therefore, we have
\[\RHom_A(T,X[i])\cong\Ho^i(\RHom_A(T,X))\cong\Ext_A^i(T,X)\] and
\[Y[-i]\dotimes_BT\cong\Ho^{-i}(Y\dotimes_BT)\cong\Tor_i^E(T,Y),\] so the proof is complete.
\end{proof}

\subsection*{Acknowledgement} This research is supported by a grant of the Ministry of Research, Innovation and Digitization, CNCS/CCCDI--UEFISCDI, project number PN-III-P4-ID-PCE-2020-0454, within PNCDI III.

\end{document}